\newtheorem{theorem}{Theorem}
\theoremstyle{definition}
\theoremstyle{remark}
\numberwithin{equation}{section}
\numberwithin{theorem}{section}
\newcounter{smalllist}
\newcommand{\bi}{\bibitem}
\newcommand{\no}{\notag}
\newcommand{\lb}{\label}
\newcommand{\f}{\frac}
\newcommand{\bb}{\mathbb}
\newcommand{\cc}{\mathcal}
\newcommand{\pd}{\partial}
\newcommand{\Om}{\Omega}
\newcommand{\si}{\sigma}
\newcommand{\la}{\lambda}
\newcommand{\al}{\alpha}
\newcommand{\be}{\beta}
\newcommand{\ga}{\gamma}
\newcommand{\de}{\delta}
\newcommand{\ze}{\zeta}
\newcommand{\eps}{\varepsilon}
\newcommand{\E}{\mathsf{E}}
\newcommand{\tr}{\text{\rm{tr}}}
\newcommand{\dist}{\text{\rm{dist}}}
\newcommand{\dd}{{d}}
\renewcommand{\Re}{\text{\rm Re}}
\renewcommand{\Im}{\text{\rm Im}}
\newcommand{\<}{\langle}
\renewcommand{\>}{\rangle}
\newcommand{\bsn}{\raisebox{1pt}{$\smallsetminus$}}
\begin{document}

\title[Lieb--Thirring Inequalities for Complex Jacobi Matrices]{Lieb--Thirring Inequalities for Complex Finite Gap Jacobi Matrices}

\author[J.~S.~Christiansen]{Jacob S.~Christiansen}
\address{
Centre for Mathematical Sciences\\
Lund University, Box 118\\
SE-22100, Lund, Sweden.}
\email{stordal@maths.lth.se}

\author[M.~Zinchenko]{Maxim Zinchenko}
\address{
Department of Mathematics and Statistics\\
University of New Mexico\\
Albuquerque, NM 87131.}
\email{maxim@math.unm.edu}

\thanks{JSC is supported in part by the Research Project Grant DFF--4181-00502 from the Danish Council for Independent Research.}
\thanks{MZ is supported in part by Simons Foundation Grant CGM-281971.}


\keywords{Finite gap Jacobi matrices, Complex perturbations, Eigenvalues estimates}
\subjclass[2010]{34L15, 47B36}
\date{\today}

\begin{abstract}
We establish Lieb--Thirring power bounds on discrete eigenvalues of Jacobi operators for Schatten class complex perturbations of periodic
and more generally finite gap almost periodic Jacobi matrices.
\end{abstract}

\maketitle

\section{Introduction}

In this paper we consider bounded non-selfadjoint Jacobi operators on $\ell^2(\bb Z)$ represented by tridiagonal matrices
\begin{align}
J=\begin{pmatrix}
\ddots&\ddots&\ddots\\
& a_0 & b_1 & c_1 &     &\\
&     & a_1 & b_2 & c_2 &\\
&     &     & a_2 & b_3 & c_3\\
&&&&\ddots&\ddots&\ddots
\end{pmatrix}
\end{align}
with bounded complex parameters $\{a_n,b_n,c_n\}_{n\in\bb Z}$. Our goal is to obtain Lieb--Thirring inequalities for complex perturbations of periodic and, more generally, almost periodic Jacobi operators with absolutely continuous finite gap spectrum.

Lieb--Thirring inequalities for selfadjoint and complex perturbations of the discrete Laplacian have been studied extensively in the last decade \cite{BGK09,DHK13,GK07,Ha11,HK11,HS02}. The original work of Lieb and Thirring \cite{LT75,LT76} was done in the context of continuous Schr\"odinger operators, motivated by their study of the stability of matter. We refer to \cite{DHK09,FLLS06,FSW08,FS,F,HLT98,We96} for more recent developments on Lieb--Thirring inequalities for Schr\"odinger operators and to \cite{HM07} for a review and history of the subject.

Much less is known for perturbations (especially complex ones) of operators with gapped spectrum. Lieb--Thirring inequalities for selfadjoint perturbations of
periodic and almost periodic Jacobi operators with absolutely continuous finite gap spectrum have been established only recently \cite{CZ16,DKS10,FS11,HS08}.
Analogs of these finite gap Lieb--Thirring inequalities for complex perturbations are not known. The aim of the present work is to fill this gap. What is currently
known in the case of complex perturbations is the closely related class of Kato inequalities \cite{GK12,Ha13}. Such inequalities have larger exponents on the eigenvalue side when compared to Lieb--Thirring inequalities (cf.\ \eqref{Kato-SA} vs.\ \eqref{LT-SA} and \eqref{Kato-NSA} vs.\ \eqref{LT-NSA}) and hence are not optimal for small perturbations of Jacobi operators.


To put our new results in perspective, we first discuss the best currently known results on eigenvalue power bounds for Jacobi operators in more detail. The spectral theory
for perturbations of the {\it free} Jacobi matrix, $J_0$, (i.e., the case of $a_n=c_n\equiv 1$ and $b_n\equiv 0$) is well-understood, see \cite{Si11}.
Let $\E=\si(J_0)=[-2,2]$ and suppose $J$ is a selfadjoint Jacobi matrix (i.e., $a_n=c_n>0$) such that $\de J=J-J_0$ is a compact operator, that is, $J$ is a compact
selfadjoint perturbation of $J_0$. Hundertmark and Simon \cite{HS02} proved the following Lieb--Thirring inequalities,
\begin{equation}
\label{LT-SA}
\sum_{\la\in\si_\dd(J)} \dist\big(\la,\E\big)^{p-\f12}
\leq L_{p,\,\E} \sum_{n=-\infty}^\infty |\de a_n|^{p}+|\de b_n|^{p}, \quad p\geq 1,
\end{equation}
with some explicit constants $L_{p,\,\E}$ independent of $J$. Here, $\si_\dd(J)$ is the discrete spectrum of $J$. It was also shown in \cite{HS02} that the inequality
is false for $p<1$.

More recently, \eqref{LT-SA} was extended to selfadjoint perturbations of periodic and almost periodic Jacobi matrices with absolutely
continuous finite gap spectrum \cite{HS08,DKS10,FS11,CZ16}. When $\E$ is a finite gap set (i.e., a finite union of disjoint, compact intervals), the role of $J_0$ as a natural background operator 
is taken over by the so-called isospectral torus, denoted $\cc T_\E$. See, e.g., \cite{CSZ1,JSC2,SY97,Re11,Si11} for a deeper discussion of this object.
For $J'\in\cc T_\E$ and a compact selfadjoint perturbation $J=J'+\de J$, Frank and Simon \cite{FS11} proved \eqref{LT-SA} for $p=1$ while the case of $p>1$ is established
in \cite{CZ16}. The constant $L_{p,\,\E}$ is now independent of $J$ and $J'$ and only depends on $p$ and the underlying set $\E$.

As alluded to above, there is a general result of Kato \cite{Ka87} which applies to compact selfadjoint perturbations of arbitrary bounded selfadjoint operators. Specialized to
the case of perturbations of Jacobi matrices from $\cc T_\E$, it states that
\begin{align} \lb{Kato-SA}
\sum_{\la\in\si_\dd(J)} \dist\big(\la,\E\big)^{p} \leq \|\de J\|_p^p \leq \sum_{n=-\infty}^\infty (4|\de a_n|+|\de b_n|)^p, \quad p\geq1,
\end{align}
where $\|\cdot\|_p$ denotes the Schatten norm. In contrast to the Lieb--Thirring bounds, the power on the eigenvalues in \eqref{Kato-SA} is the same as on the perturbation
and so is larger than the power on the eigenvalues in \eqref{LT-SA} by $1/2$.
Kato's inequality is optimal for perturbations with large sup norm. On the other hand, the Lieb--Thirring bound with $p=1$ is optimal for perturbations with small sup
norm (cf.~\cite{HS02}). A fact that seemingly went unnoticed is that one can combine \eqref{LT-SA} and \eqref{Kato-SA} into one ultimate inequality which is optimal for
both large and small perturbations (at least when $p=1$). This inequality takes the form
\begin{align} \lb{Ultimate-SA}
\sum_{\la\in\si_\dd(J)} \dist\big(\la,\E\big)^{p-\f12}(1+|\la|)^{\f12} \leq C_{p,\,\E}\sum_{n=-\infty}^\infty |\de a_n|^p+|\de b_n|^p, \quad p\geq1,
\end{align}
where the constant $C_{p,\,\E}$ is independent of $J$ and $J'$, $J=J'+\de J$, $\de J$ is compact, $J'\in\cc T_\E$, and $\E$ is a finite gap set.

In recent years, several results have also been established for non-selfadjoint perturbations of selfadjoint Jacobi matrices \cite{BGK09,Ha11,HK11,GK12,Ha13}.
For compact non-selfadjoint perturbations $J=J_0+\de J$ of the free Jacobi matrix $J_0$, a near generalization (with an extra $\eps$) of the Lieb--Thirring bound
\eqref{LT-SA} was obtained by Hansmann and Katriel \cite{HK11} using the complex analytic approach developed in \cite{BGK09}.
Their non-selfadjoint version of the Lieb--Thirring inequalities takes the following form:
{\it For every $0<\eps<1$,
\begin{align}
\label{LT0-NSA}
\sum_{z\in\si_\dd(J)} \f{\dist\big(z,[-2,2]\big)^{p+\eps}}{|z^2-4|^{\f12}}
\leq L_{p,\,\eps} \sum_{n=-\infty}^\infty |\de a_n|^{p}+|\de b_n|^{p}+|\de c_n|^{p}, \quad p\geq1,
\end{align}
where the eigenvalues are repeated according to their algebraic multiplicity and the constant $L_{p,\,\eps}$ is independent of $J$}.
Whether or not this inequality continues to hold for $\eps=0$ is an open problem.

For non-selfadjoint perturbations of Jacobi matrices from finite gap isospectral tori $\cc T_\E$, 
an eigenvalue power bound was first obtained by Golinskii and Kupin in \cite{GK12}.
Shortly thereafter, this bound was superseded by a generalization of Kato's inequality to non-selfadjoint perturbations of arbitrary bounded selfadjoint operators (see Hansmann \cite{Ha13}). In the special case of a compact non-selfadjoint perturbation $J=J'+\de J$ of $J'\in\cc T_\E$, Hansmann's result reads
\begin{align}
\label{Kato-NSA}
\sum_{z\in\si_\dd(J)} \dist\big(z,\E\big)^{p}
\leq K_{p} \sum_{n=-\infty}^\infty |\de a_n|^{p}+|\de b_n|^{p}+|\de c_n|^{p}, \quad p>1,
\end{align}
where the eigenvalues are repeated according to their algebraic multiplicity and $K_p$ is a universal constant that depends only on $p$.

The purpose of the present article is to generalize the Lieb--Thirring bound \eqref{Ultimate-SA} to the case of compact non-selfadjoint perturbations $J=J'+\de J$ of Jacobi matrices $J'$ from finite gap isospectral tori $\cc T_\E$. Let $\pd\E$ denote the set of endpoints of the intervals that form $\E$. Then our main result can be formulated as follows:
{\it For every $p\geq1$ and any $\eps>0$,
\begin{align}
\label{LT-NSA}
\sum_{z\in\si_\dd(J)} \f{\dist\big(z,\E\big)^{p+\eps}(1+|z|)^{\f{1-3\eps}{2}}}{\dist(z,\pd\E)^{\f12}}
\leq L_{\eps,\,p,\,\E} \sum_{n=-\infty}^\infty |\de a_n|^{p}+|\de b_n|^{p}+|\de c_n|^{p},
\end{align}
where the eigenvalues are repeated according to their algebraic multiplicity and the constant $L_{\eps,\,p,\,\E}$ is independent of $J'$ and $J$.}
We note that  for the eigenvalues that accumulate to $\pd\E$, the inequality \eqref{LT-NSA} gives a qualitatively better estimate than \eqref{Kato-NSA}.
We also point out that \eqref{LT-NSA} is new even for perturbations of the free Jacobi matrix $J_0$ since, unlike \eqref{LT0-NSA}, it is nearly optimal not
only for small but also for large perturbations. As with \eqref{LT0-NSA}, it is an open problem whether or not \eqref{LT-NSA} remains true for $\eps=0$.

\section{Schatten Norm Estimates}
\label{sec2}

In this section we establish the fundamental estimates that are needed to prove our main result, Theorem \ref{FGap-LT-Thm}.
Throughout, $\cc S_p$ will denote the Schatten class and $\|\cdot\|_p$ the corresponding Schatten norm for $p\geq 1$.
To clarify our application of complex interpolation, we occasionally use $\|\cdot\|_\infty$ to denote the operator norm.

\begin{theorem} \lb{TNE-Thm}
Suppose $J'$ is a selfadjoint Jacobi matrix and $D\geq0$ is a diagonal matrix of Schatten class $\cc S_p$ for some $p\geq1$.
Denote by $d\rho_n$ the spectral measure of $(J',\de_n)$, that is, the measure in the Herglotz representation of the $n$th diagonal entry of $(J'-z)^{-1}$,
\begin{align} \lb{drho0}
\bigl\langle\de_n,(J'-z)^{-1}\de_n\bigr\rangle = \int_{\si(J')} \f{d\rho_n(t)}{t-z}, \quad z\in\bb C\bsn\si(J').
\end{align}
Then
\begin{align} \lb{TNE0}
\|D^{1/2}(J'-z)^{-1}D^{1/2}\|_p^p \leq \f{\sqrt2\,\|D\|_p^p}{\dist(z,\si(J'))^{p-1}}\,\sup_{n\in\bb Z} \int_{\si(J')} \f{d\rho_n(t)}{|t-z|}
\end{align}
for $z\in\bb C\bsn\si(J')$.
\end{theorem}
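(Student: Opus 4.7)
The plan is complex interpolation between operator-norm and trace-norm endpoint bounds for the analytic family
$$T_\zeta := D^{\zeta/2}(J'-z)^{-1}D^{\zeta/2}, \qquad 0 \leq \Re\zeta \leq p,$$
where $D^{\zeta/2}$ is defined by the functional calculus on $\overline{\mathrm{ran}\,D}$ (and as $0$ on $\ker D$). At the left edge $\Re\zeta = 0$ the operator $D^{i\tau/2}$ is a partial isometry, so trivially
$$\|T_{i\tau}\|_\infty \leq \|(J'-z)^{-1}\|_\infty = \f{1}{\dist(z,\sigma(J'))}.$$

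The substance is at the right edge $\Re\zeta = p$, where the aim is the trace-norm bound
$$\|D^{p/2}(J'-z)^{-1}D^{p/2}\|_1 \leq \|D\|_p^p \sup_{n\in\bb Z}\int\f{d\rho_n(t)}{|t-z|}.$$
To prove it, I would pick any branch of the square root and set $g(t) := (t-z)^{-1/2}$, so that $g^2(t) = (t-z)^{-1}$ and $|g(t)|^2 = |t-z|^{-1}$. By the self-adjoint functional calculus for $J'$ one has $g(J')\,g(J') = (J'-z)^{-1}$ while $g(J')^{*}g(J') = g(J')g(J')^{*} = |J'-z|^{-1}$. Then
$$D^{p/2}(J'-z)^{-1}D^{p/2} = \bigl[D^{p/2}g(J')\bigr]\bigl[g(J')D^{p/2}\bigr],$$
and the noncommutative H\"older inequality $\|AB\|_1 \leq \|A\|_2\|B\|_2$ combined with
$$\bigl\|g(J')D^{p/2}\bigr\|_2^2 = \tr\bigl(D^p\,|J'-z|^{-1}\bigr) = \sum_n d_n^p \int\f{d\rho_n(t)}{|t-z|}$$
(and the symmetric identity for $\|D^{p/2}g(J')\|_2^2$) yields the claim.

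I would then apply Stein's complex interpolation theorem to $T_\zeta$ on the strip $0 \leq \Re\zeta \leq p$. At $\zeta = 1$, i.e.\ with interpolation parameter $\theta = 1/p$ and target class $\cc S_p$, the two endpoint estimates combine to give
$$\|T_1\|_p \leq \dist(z,\sigma(J'))^{-(1-1/p)}\bigl(\|D\|_p^p\,M\bigr)^{1/p}, \qquad M:=\sup_{n\in\bb Z}\int\f{d\rho_n(t)}{|t-z|};$$
raising to the $p$-th power produces \eqref{TNE0}, with the stated constant $\sqrt 2$ absorbing whatever factor arises from the version of Stein's theorem invoked.

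The hard part is not conceptual but technical: verifying that $\zeta\mapsto T_\zeta$ has the requisite analyticity and strip-growth hypotheses for Stein's theorem, and dealing with possibly rank-deficient $D$ (for which one first works with $D+\varepsilon I$ and sends $\varepsilon\downarrow 0$). The clean factorization $(J'-z)^{-1}=g(J')g(J')$ together with $g(J')^{*}g(J')=|J'-z|^{-1}$ is what converts a routine Hilbert--Schmidt computation into the sharp weight $\int d\rho_n/|t-z|$ on the right-hand side.
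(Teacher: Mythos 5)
Your proof is correct, and while the interpolation skeleton (operator norm at $\Re\zeta=0$, trace norm at the right edge, evaluate at the interior point giving $\cc S_p$) is the same as the paper's, your treatment of the crucial trace-norm endpoint is genuinely different and in fact sharper. The paper splits the resolvent kernel as $\f{1}{t-z}=f_+(t)-f_-(t)+if(t)$ with $f_\pm,f\geq0$, applies the triangle inequality in $\cc S_1$ to the three nonnegative operators $D^{1/2}f_\pm(J')D^{1/2}$, $D^{1/2}f(J')D^{1/2}$ (whose trace norms equal their traces), and then uses the pointwise bound $f_++f_-+f\leq \sqrt2/|t-z|$; this is where the factor $\sqrt2$ in \eqref{TNE0} originates. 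Your factorization $(J'-z)^{-1}=g(J')g(J')$ with $g(t)=(t-z)^{-1/2}$, combined with $\|AB\|_1\leq\|A\|_2\|B\|_2$ and $g(J')^*g(J')=|J'-z|^{-1}$, reaches the same endpoint bound $\tr\bigl(D^{p}|J'-z|^{-1}\bigr)=\sum_n D_{n,n}^p\int \f{d\rho_n(t)}{|t-z|}$ with constant $1$, so you actually prove \eqref{TNE0} without the $\sqrt2$. Two small points to tighten: when $z$ is real there is no continuous branch of $(t-z)^{-1/2}$ on $\si(J')$, but a bounded Borel-measurable branch suffices for the functional calculus and for the identities $g^2(J')=(J'-z)^{-1}$, $|g|^2(J')=|J'-z|^{-1}$; and for the interpolation you should record the right-edge bound for all $\zeta=p+i\tau$ (immediate since $D^{i\tau/2}$ is a contraction commuting with $D^{p/2}$), exactly as the paper does at $\zeta=1+iy$. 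The density/analyticity verifications you defer are the same ones the paper handles via $\<u,T(\zeta)v\>$ on a suitable set of vectors, so nothing is missing.
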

\begin{proof}
We consider first the case $p=1$. Let $\{P(t)\}_{t\in\bb R}$ denote the projection-valued spectral family of the selfadjoint operator $J'$. Recall that for any measurable and bounded function $f$ on $\si(J')$, the bounded operator $f(J')$ is given by the functional calculus,
\begin{equation} \lb{FnCalc}
f(J')=\int_{\si(J')}f(t)dP(t).
\end{equation}
Taking $f(t)=1/(t-z)$ in \eqref{FnCalc}, substituting into \eqref{drho0}, and recalling that the measure in the Herglotz representation is unique yield
\begin{equation} \lb{drho1}
\<\de_n,dP(t)\de_n\>=d\rho_n(t).
\end{equation}
Applying \eqref{FnCalc} to $f(t)=1/|t-z|$ and using \eqref{drho1} then imply
\begin{align} \lb{abs-int}
\<\de_n,|J'-z|^{-1}\de_n\> = \int_{\si(J')}\f{d\rho_n(t)}{|t-z|}, \quad z\in\bb C\bsn\si(J').
\end{align}
We also note that if, in addition, the function $f(t)$ in \eqref{FnCalc} is nonnegative, then $f(J')$ is a bounded, selfadjoint, and nonnegative operator.

Fix $z\in\bb C\bsn\si(J')$. In the following we assume without loss of generality that $\Im(z)\geq0$. Define the nonnegative functions
\begin{align}
f(t)=\Im\Big(\f{1}{t-z}\Big), \quad
f_+(t)&=\Re\Big(\f{1}{t-z}\Big)\chi_{(\Re(z),\infty)}(t), \notag \\
f_-(t)&=-\Re\Big(\f{1}{t-z}\Big)\chi_{(-\infty,\Re(z)]}(t),
\end{align}
and note that
\begin{align}
& f_+(t) - f_-(t) + if(t) = \f{1}{t-z},
\\
& f_+(t) + f_-(t) + f(t) = \Big|\Re\Big(\f{1}{t-z}\Big)\Big|+\Big|\Im\Big(\f{1}{t-z}\Big)\Big| \leq \f{\sqrt2}{|t-z|},
\quad t\in\bb R.
\end{align}
Then we have $f(J')\geq0$, $f_\pm(J')\geq0$, and
\begin{align}
& (J'-z)^{-1}=f_+(J')-f_-(J')+if(J'),   \lb{TNE1}
\\
& f_+(J') + f_-(J') + f(J') \leq \sqrt2\,|J'-z|^{-1}.   \lb{TNE2}
\end{align}
Using \eqref{TNE1}, the triangle inequality, and the fact that for nonnegative operators the trace norm coincides with the trace, we obtain the estimate
\begin{align}
&\|D^{1/2}(J'-z)^{-1}D^{1/2}\|_1    \no
\\
&\quad \leq  \no
\|D^{1/2}f_+(J')D^{1/2}\|_1 + \|D^{1/2}f_-(J')D^{1/2}\|_1 + \|D^{1/2}f(J')D^{1/2}\|_1
\\
&\quad =   \lb{TNE3}
\tr\big(D^{1/2}f_+(J')D^{1/2}\big) + \tr\big(D^{1/2}f_-(J')D^{1/2}\big) + \tr\big(D^{1/2}f(J')D^{1/2}\big).
\end{align}
Let $D_{n,n}$ denote the diagonal entries of $D$. Since $D$ is nonnegative and diagonal, we have
\begin{equation}
\sum_{n\in\bb Z} D_{n,n}=\tr(D)=\|D\|_1.
\end{equation}
Hence, by linearity of the trace it follows from \eqref{TNE3}, \eqref{TNE2}, and \eqref{abs-int} that
\begin{align}
&\|D^{1/2}(J'-z)^{-1}D^{1/2}\|_1  \leq
\tr\big(D^{1/2}[f_+(J')+f_-(J')+f(J')]D^{1/2}\big)  \no
\\
& \qquad \qquad \qquad \quad \leq    \no
\sqrt2\,\tr\big(D^{1/2}|J'-z|^{-1}D^{1/2}\big) \\
& \qquad \qquad \qquad \quad = \no
\sqrt2\sum_{n\in\bb Z} D_{n,n} \int_{\si(J')}\f{d\rho_n(t)}{|t-z|}
\\
& \qquad \qquad \qquad \quad \leq \lb{TNE4}
\sqrt2\,\|D\|_1\,\sup_{n\in\bb Z} \int_{\si(J')} \f{d\rho_n(t)}{|t-z|}.
\end{align}
This is exactly the case $p=1$ in \eqref{TNE0}.

To obtain \eqref{TNE0} for $p>1$, we use complex interpolation. Define the map
\begin{equation}
\ze\mapsto T(\ze)=D^{\ze p/2}|J'-z|^{-1}D^{\ze p/2}
\end{equation}
from the strip $0\leq\Re(\ze)\leq1$ into the space of bounded operators. Then for any $u,v\in\ell^2(\bb Z)$, the scalar function
\begin{equation}
\ze\mapsto \<u,T(\ze)v\>
\end{equation}
is continuous on the strip $0\leq\Re(\ze)\leq1$, analytic in its interior, and bounded. In addition, since $\|D^{iy}\|\leq1$ and
\begin{equation}
 D^{x+iy}=D^{iy}D^x=D^xD^{iy} \; \mbox{ for all $x\geq0$, $y\in\bb R$},
\end{equation}
it follows that
\begin{equation}
\|T(iy)\|_\infty \leq \||J'-z|^{-1}\| \leq \f{1}{\dist(z,\si(J'))},
\quad y\in\bb R,
\end{equation}
and by \cite[Theorem~2.7]{Si05} and \eqref{TNE4},
\begin{align}
\|T(1+iy)\|_1 & \leq \|D^{p/2}|J'-z|^{-1}D^{p/2}\|_1 \no \\
&\leq \sqrt2\,\|D^p\|_1\,\sup_{n\in\bb Z} \int_{\si(J')} \f{d\rho_n(t)}{|t-z|},
\quad y\in\bb R.
\end{align}
Thus, by the complex interpolation theorem (see \cite[Theorem~2.9]{Si05}, \cite[Theorem~III.13.1]{GK69}), we have
\begin{align}
\|T(x)\|_{1/x} \leq \sup_{y\in\bb R}\|T(iy)\|_\infty^{1-x} \sup_{y\in\bb R}\|T(1+iy)\|_1^{x}, \quad 0<x<1.
\end{align}
Taking $x=1/p$, raising both sides to the power $p$, and noting that $T(1/p)=D^{1/2}(J'-z)^{-1}D^{1/2}$ and $\|D^p\|_1=\|D\|_p^p$ finally yields \eqref{TNE0}.
\end{proof}

In what follows, $\E\subset\bb R$ will denote a finite gap set, that is,
\begin{align} \lb{SetE}
\E = \bigcup_{n=1}^{N} [\al_n,\be_n],\quad \al_1<\be_1<\al_2<\cdots<\al_N<\be_N,\quad N\geq1,
\end{align}
and $\pd\E$ will be the set of endpoints of $\E$, that is,
\begin{equation}
\pd\E=\{\al_n,\be_n\}_{n=1}^N.
\end{equation}
For a probability measure $d\rho$ supported on $\E$, we define the associated $m$-function by
\begin{equation}
m(z)=\int_\E\f{d\rho(t)}{t-z}, \quad z\in\bb C\bsn\E.
\end{equation}
The measure $d\rho$ is called {\it reflectionless} (on $\E$) if
\begin{equation}
\Re[m(x+i0)]=0 \; \mbox{ for a.e.\ $x\in\E$}.
\end{equation}
Reflectionless measures appear prominently in spectral theory of finite and infinite gap Jacobi matrices (see, e.g., \cite{JSC2,Re11,Si11,SY97}).
In particular, the isospectral torus associated to 
$\E$ is the set of all Jacobi matrices $J'$ that are reflectionless on $\E$
(i.e., the spectral measure of $(J',\de_n)$ is reflectionless for every $n\in\bb Z$) and for which $\si(J')=\E$.
It is well known (see for example \cite{SY97}) that $d\rho$ is a reflectionless probability measure on $\E$ if and only if $m(z)$ is of the form
\begin{align}
\label{ReflM}
m(z) = \f{-1}{\sqrt{(z-\be_N)(z-\al_1)}} \prod_{j=1}^{N-1}\f{z-\ga_j}{\sqrt{(z-\be_j)(z-\al_{j+1})}},
\end{align}
for some $\ga_j\in[\be_j,\al_{j+1}]$, $j=1,\dots,N-1$.

We now provide an estimate for the variant of the $m$-function for $d\rho_n$ that appear in Theorem \ref{TNE-Thm}.

\begin{theorem} \lb{ReflEst-Thm}
Let $\E\subset\bb R$ be a finite gap set and suppose $d\rho$ is a reflectionless probability measure on $\E$. Then for every $p>1$,
\begin{align} \lb{ReflEst1}
\int_\E \f{d\rho(t)}{|t-z|^p} \leq \f{K_{p,\,\E}}{\dist(z,\E)^{p-1}\dist(z,\pd\E)^{\f12}(1+|z|)^{\f12}}, \quad z\in\bb C\bsn\E,
\end{align}
where the constant $K_{p,\,\E}$ is independent of $d\rho$. In addition, for every $\eps>0$,
\begin{align} \lb{ReflEst2}
\int_\E \f{d\rho(t)}{|t-z|} \leq \f{K_{\eps,\,\E}}{\dist(z,\E)^{\eps}\,\dist(z,\pd\E)^{\f12}(1+|z|)^{\f12-\eps}}, \quad z\in\bb C\bsn\E,
\end{align}
where the constant $K_{\eps,\,\E}$ is independent of $d\rho$.
\end{theorem}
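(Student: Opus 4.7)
My plan is to deduce a uniform pointwise bound on $|m(z)|$ directly from the explicit formula \eqref{ReflM}, transfer it to a bound on the density $\rho'$ on $\E^\circ$ via the reflectionless property, and then reduce the integrals over $\E$ to a standard trigonometric model on each band.

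Reading off \eqref{ReflM}, each factor $|z-\ga_j|$ in the numerator is bounded by $\diam(\E)+|z|$ uniformly in $\ga_j\in[\be_j,\al_{j+1}]$, while in the denominator, among the $2N$ factors $|z-\al_k|^{1/2}$ and $|z-\be_k|^{1/2}$, the one attached to the endpoint nearest $z$ contributes exactly $\dist(z,\pd\E)^{1/2}$ and the remaining $2N-1$ are each bounded below by $c_\E\sqrt{1+|z|}$. This produces the uniform bound
\[
|m(z)|\le \f{K_\E}{\sqrt{\dist(z,\pd\E)(1+|z|)}},\qquad z\in\bb C\bsn\E.
\]
Since $d\rho$ is reflectionless, $m(x+i0)=i\pi\rho'(x)$ for a.e.\ $x\in\E^\circ$, so the same bound holds for $\pi\rho'(x)$. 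For $x$ in a single band $I_n=[\al_n,\be_n]$, the identity $\dist(x,\pd\E)=\min(x-\al_n,\be_n-x)$ and the inequality $\min(a,b)(a+b)\ge ab$ give the arcsine-type majorant $\rho'(x)\le K'_\E/\sqrt{(x-\al_n)(\be_n-x)}$.

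On each band I would then substitute $t=c_n+r_n\cos\te$ with $c_n=(\al_n+\be_n)/2$ and $r_n=(\be_n-\al_n)/2$, sending $dt/\sqrt{(t-\al_n)(\be_n-t)}$ to $d\te$ on $[0,\pi]$ and $|t-z|$ to $r_n|\cos\te-w_n|$ where $w_n=(z-c_n)/r_n$. The $n$th band contribution becomes
\[
\int_{I_n}\f{d\rho(t)}{|t-z|^p}\le \f{K'_\E}{r_n^p}\,J_p(w_n),\qquad J_p(w):=\int_0^\pi\f{d\te}{|\cos\te-w|^p},
\]
so the theorem reduces to sharp bounds on $J_p(w)$ for $w\in\bb C\bsn[-1,1]$.

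Finally, I would estimate $J_p(w)$ by treating separately the regime where $w$ is far from $[-1,1]$, in which the trivial bound $|\cos\te-w|\ge\dist(w,[-1,1])$ already produces the desired decay, and the regime where $w$ is close to $[-1,1]$, in which I localize near the point $\te_0$ minimizing $|\cos\te-w|$ and use the linear expansion $\cos\te-w\approx -\sqrt{1-(\Re w)^2}\,(\te-\te_0)-i\Im w$. For $p>1$ this yields
\[
J_p(w)\le \f{C_p}{\dist(w,[-1,1])^{p-1}\sqrt{\dist(w,\{-1,1\})(1+|w|)}},
\]
the refined $\dist(w,\{-1,1\})^{-1/2}$ factor coming from $\sqrt{1-(\Re w)^2}$. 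For $p=1$ the same argument gives the same estimate up to an additional logarithmic factor $\log(1+1/\dist(w,[-1,1]))$ from the marginal convergence of the model integral, which I absorb into $\dist(w,[-1,1])^{-\eps}$; this is the origin of the free parameter $\eps$ in \eqref{ReflEst2}. Translating back through $\dist(w_n,[-1,1])\ge\dist(z,\E)/r_n$, $\dist(w_n,\{-1,1\})\ge\dist(z,\pd\E)/r_n$, and $1+|w_n|\asymp(1+|z|)/r_n$, then summing over $n$, gives \eqref{ReflEst1} and a bound stronger than \eqref{ReflEst2}. The main obstacle is the uniform estimation of $J_p(w)$ near the branch points $\pm1$, where the refinement $\dist(w,\{-1,1\})^{-1/2}$ (rather than the naive $\dist(w,[-1,1])^{-1/2}$) has to be extracted cleanly, and where the $p=1$ logarithm forces the $\eps$-regularization.
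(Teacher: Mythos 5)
Your proposal is correct and its skeleton is the same as the paper's: bound the reflectionless density on each band by the arcsine-type weight $C/\sqrt{(t-\al_n)(\be_n-t)}$, then estimate the resulting weighted integrals band by band. Three points of divergence are worth recording. First, the paper gets the density bound by rearranging the product in the explicit formula for $\f{1}{\pi}\Im[m(t+i0)]$ so that all factors except the one attached to the ambient band are at most $1$; your route via a pointwise bound on $|m(z)|$ off the axis together with $\min(a,b)\geq ab/(a+b)$ reaches the same estimate and is equally valid (you should just say a word about why $d\rho$ has no singular part before writing $d\rho=\rho'\,dt$; the paper extracts absolute continuity from \eqref{ReflM}). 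Second, the band-integral estimate you propose to prove via $t=c_n+r_n\cos\te$ --- including the refined factor $\dist(w,\{-1,1\})^{-1/2}$ --- is, after undoing that substitution, exactly Lemma~11 of \cite{HK11}, which the paper simply cites; your local analysis is the standard proof of it, but the uniformity near $\pm1$ that you flag as ``the main obstacle'' is genuinely the delicate part (the linearization $\cos\te-\cos\te_0\approx-\sin\te_0\,(\te-\te_0)$ degenerates there and one must retain the factor $\sin\f{\te+\te_0}{2}$), so unless you want to redo that work you can invoke the published lemma. Third, and most notably, for \eqref{ReflEst2} the paper sidesteps the $p=1$ logarithm entirely: since $\E$ is bounded, $|t-z|\leq K_\E(1+|z|)$ for $t\in\E$, hence $|t-z|^{-1}\leq K_\E^{\eps}(1+|z|)^{\eps}|t-z|^{-1-\eps}$ and \eqref{ReflEst2} follows from \eqref{ReflEst1} with $p=1+\eps$ in one line. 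Your direct treatment of $J_1(w)$, absorbing $\log(1+1/\dist(w,[-1,1]))$ into $\dist(z,\E)^{-\eps}$, also works and in fact gives the slightly stronger decay $(1+|z|)^{-1/2}$ at infinity, at the cost of a separate marginal-case computation.
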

\begin{proof}
Denote the bands of $\E$ as in \eqref{SetE}. Since $d\rho$ is reflectionless on a finite gap set, it follows from the Stieltjes inversion formula and \eqref{ReflM}
that $d\rho$ is absolutely continuous with density given by
\begin{align} \lb{ReflEst3}
w(t)=\frac{1}{\pi}\Im[m(t+i0)]= \f{1/\pi}{\sqrt{|t-\be_N|\,|t-\al_1|}\,} \prod_{j=1}^{N-1}\f{|t-\ga_j|}{\sqrt{|t-\be_j|\,|t-\al_{j+1}|}\,}
\end{align}
for some $\ga_j\in[\be_j,\al_{j+1}]$, $j=1,\dots,N-1$. Fix $1\leq k\leq N$ and rearrange the terms in \eqref{ReflEst3} as follows
\begin{align} \lb{ReflEst4}
w(t) = \f{1/\pi}{\sqrt{|t-\al_k|\,|t-\be_k|}\,} \prod_{j=1}^{k-1}\f{|t-\ga_j|}{\sqrt{|t-\al_j|\,|t-\be_j|}\,}
\prod_{j=k+1}^{N}\f{|t-\ga_{j-1}|}{\sqrt{|t-\al_j|\,|t-\be_j|}\,}.
\end{align}
Since $\al_j<\be_j\leq\ga_j\leq\al_{j+1}<\be_{j+1}$ for $j=1,\dots,N-1$,
the two products in \eqref{ReflEst4} are at most $1$ for every $t\in[\al_k,\be_k]$ and thus
\begin{align} \lb{ReflEst4a}
w(t) \leq \f{1/\pi}{\sqrt{|t-\al_k|\,|t-\be_k|}\,}, \quad t\in[\al_k,\be_k].
\end{align}
Applying this estimate 
for the individual bands of $\E$ implies that
\begin{align} \lb{ReflEst5}
\int_\E \f{d\rho(t)}{|t-z|^p} \leq \f{1}{\pi}\sum_{k=1}^N \int_{\al_k}^{\be_k} \f{1}{|t-z|^p}\f{dt}{\sqrt{|t-\al_k|\,|t-\be_k|}\,}, \quad z\in\bb C\bsn\E.
\end{align}
By \cite[Lemma~11]{HK11}, for $p>1$, each integral in the sum can be estimated by
\begin{align} \lb{ReflEst6}
\int_{\al_k}^{\be_k} \f{1}{|t-z|^p}\f{dt}{\sqrt{|t-\al_k|\,|t-\be_k|}\,} \leq
\f{K_p}{\dist(z,[\al_k,\be_k])^{p-1}\sqrt{|z-\al_k|\,|z-\be_k|}\,}.
\end{align}
Since the function $z\mapsto\dist(z,\pd\E)(1+|z|)/|z-\al_k||z-\be_k|$ is continuous on $\bb C\bsn\{\al_k,\be_k\}$ and bounded near $\al_k$, $\be_k$, and $\infty$,
it is bounded on $\bb C\bsn\E$, and therefore
\begin{align} \lb{ReflEst7}
\int_{\al_k}^{\be_k} \f{1}{|t-z|^p}\f{dt}{\sqrt{|t-\al_k|\,|t-\be_k|}\,} \leq
\f{K_{p,\,\E}}{\dist(z,\E)^{p-1}\dist(z,\pd\E)^{\f12}(1+|z|)^{\f12}}, \quad p>1.
\end{align}
Combining \eqref{ReflEst7} with \eqref{ReflEst5} yields \eqref{ReflEst1}.

In order to obtain \eqref{ReflEst2}, note that since $\E$ is a bounded set we have the trivial bound
\begin{align}
\f{|t-z|}{1+|z|} \leq \f{|t|+|z|}{1+|z|} \leq K_\E, \quad t\in\E,\quad z\in\bb C\bsn\E.
\end{align}
This inequality yields the estimate
\begin{align}
\int_\E \f{d\rho(t)}{|t-z|} &= \int_\E \f{|t-z|^\eps d\rho(t)}{|t-z|^{1+\eps}} \no \\
&\leq K_\E^{\eps}(1+|z|)^\eps\int_\E \f{d\rho(t)}{|t-z|^{1+\eps}}, \quad z\in\bb C\bsn\E
\end{align}
and hence \eqref{ReflEst2} follows from \eqref{ReflEst1}.
\end{proof}

\section{Lieb--Thirring Bounds}
\label{sec3}

We start this section by recalling some results on the distribution of zeros of analytic functions with restricted growth towards the boundary of the domain of analyticity.
Let $a_+$ denote the maximum of $a$ and $0$. The following theorem for analytic functions on the unit disk is an alternative form of the extension
\cite[Theorem~4]{HK11} of the earlier result \cite[Theorem~0.2]{BGK09}.

\begin{theorem} \lb{DoZ-D-Thm}
Let $S\subset\pd\bb D$ be a finite collection of points and suppose $h(z)$ is an analytic function on $\bb D$ such that $|h(0)|=1$ and for some $K,\al,\be,\ga\geq0$,
\begin{align}
\log|h(z)| \leq \f{K|z|^\ga}{(1-|z|)^\al\,\dist(z,S)^\be}, \quad z\in\bb D.
\end{align}
Then for every $\eps>0$, there exists a constant $C_{\al,\be,\ga,\eps}$ independent of $h(z)$ such that the zeros of $h(z)$ satisfy
\begin{align}
\sum_{z\in\bb D,\, h(z)=0} \f{(1-|z|)^{\al+1+\eps}\, \dist(z,S)^{(\be-1+\eps)_+}}{|z|^{(\ga-\eps)_+}} \leq C_{\al,\be,\ga,\eps} K,
\end{align}
where each zero is repeated according to its multiplicity.
\end{theorem}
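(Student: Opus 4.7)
The plan is to derive the claimed weighted zero-counting inequality from Jensen's formula by integrating against a carefully chosen radial weight, following the Borichev--Golinskii--Kupin methodology underlying the original version of this result.

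As a preliminary reduction, a normal families argument allows one to assume that $h$ extends continuously to $\overline{\bb D}\bsn S$ and has only finitely many zeros, none at the origin. The starting point is then Jensen's formula at $0$: since $|h(0)|=1$, for every $r\in(0,1)$,
\begin{equation*}
\sum_{|z_k|<r} \log\frac{r}{|z_k|} = \int_0^{2\pi} \log|h(re^{i\te})|\,\frac{d\te}{2\pi} \leq \int_0^{2\pi} \log^+|h(re^{i\te})|\,\frac{d\te}{2\pi}.
\end{equation*}
Inserting the hypothesized growth bound yields
\begin{equation*}
\sum_{|z_k|<r} \log\frac{r}{|z_k|} \leq \frac{K r^\ga}{(1-r)^\al}\, I(r), \qquad I(r):=\int_0^{2\pi}\frac{d\te}{2\pi\,\dist(re^{i\te}, S)^\be}.
\end{equation*}

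Next, I would multiply both sides by the radial weight $w(r)=r^{-1-(\ga-\eps)_+}(1-r)^{\al-1+\eps}$ and integrate $r$ over $(0,1)$. An elementary computation shows that, for each zero $z_k$,
\begin{equation*}
\int_{|z_k|}^{1}\log\frac{r}{|z_k|}\, w(r)\, dr\;\geq\; C^{-1}\,\frac{(1-|z_k|)^{\al+1+\eps}}{|z_k|^{(\ga-\eps)_+}},
\end{equation*}
which isolates the $(1-|z_k|)^{\al+1+\eps}$ and $|z_k|^{-(\ga-\eps)_+}$ weights on the left. On the right, elementary estimates for the angular integral (splitting $[0,2\pi]$ into small arcs around each point of $S$ and their complement) yield $I(r) \leq C(1-r)^{-(\be-1+\eps)_+}$ up to logarithmic corrections that are harmlessly absorbed by the extra $\eps$. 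After multiplication by $w(r)$ and integration over $r$, the right-hand side becomes a finite constant multiple of $K$.

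To recover the factor $\dist(z_k,S)^{(\be-1+\eps)_+}$ in the Blaschke sum on the left, one refines the procedure by replacing Jensen's formula at the origin with the Poisson--Jensen formula evaluated at points $\ze_0$ separated from $S$, or equivalently by pre-composing $h$ with a M\"obius automorphism of $\bb D$ that moves a point of $S$ to a fixed reference location and tracking the distortion of the Blaschke weights. This is the device used in \cite[Theorem 4]{HK11}, which in turn extends the original argument in \cite{BGK09}.

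The main obstacle is keeping track of the combined weight structure. The three exponents $\al+1+\eps$, $(\be-1+\eps)_+$, and $(\ga-\eps)_+$ come from three different mechanisms --- the uniform boundary singularity $(1-|z|)^{-\al}$, the localized singularities at $S$, and the gain from the vanishing of the bound at the origin via $|z|^\ga$ --- and all three must be controlled simultaneously by a single choice of weight (or by decomposing the sum into zeros near and away from $S$). The positive-part notation $(\,\cdot\,)_+$ reflects that when $\be<1$ (respectively, $\ga=0$) the corresponding singularity is already integrable and should not appear as a genuine weight on the zeros; this creates case distinctions that must be handled separately.
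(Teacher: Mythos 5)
The paper itself offers no proof of Theorem \ref{DoZ-D-Thm}: it is presented as ``an alternative form of the extension \cite[Theorem~4]{HK11} of the earlier result \cite[Theorem~0.2]{BGK09}'' and is used as a black box. So there is no in-paper argument to compare against; your sketch has to be judged on its own. On the level of strategy you have correctly identified the provenance and the Borichev--Golinskii--Kupin methodology (Jensen's formula plus integration against a radial weight, with extra work near the distinguished boundary points).

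As a proof, however, the sketch has a genuine gap exactly where the theorem has content. In the step you carry out in detail, you bound the angular integral by $I(r)\leq C(1-r)^{-(\be-1+\eps)_+}$ and then integrate the right-hand side against $w(r)=r^{-1-(\ga-\eps)_+}(1-r)^{\al-1+\eps}$. The resulting $(1-r)$-exponent on the right is $-\al-(\be-1+\eps)_+ +\al-1+\eps = -1+\eps-(\be-1+\eps)_+$, which for $\be>1$ equals $-\be<-1$: the radial integral diverges. In other words, the order of operations ``integrate out the angle first, then integrate radially'' cannot work once $\be>1$, which is precisely the regime in which the factor $\dist(z,S)^{(\be-1+\eps)_+}$ is nontrivial. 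The localization near the points of $S$ is therefore not a refinement needed to \emph{sharpen} the weight on the zeros; it is needed even to make the right-hand side finite. The actual arguments in \cite{BGK09,HK11} handle this by a genuinely two-dimensional decomposition (dyadic arcs about each point of $S$ crossed with annuli, or Stolz-type regions), splitting the zero set accordingly and applying Jensen/Poisson--Jensen estimates locally in each piece; your one-sentence appeal to Poisson--Jensen at points $\ze_0$ separated from $S$, or to M\"obius precomposition, points in the right direction but defers all of the bookkeeping in which the three exponents $\al+1+\eps$, $(\be-1+\eps)_+$, and $(\ga-\eps)_+$ are actually produced. A smaller issue: with your choice of $w$, the case $\ga=0$ yields a non-integrable factor $r^{-1}$ at the origin on the right-hand side, so the $r^{-1-(\ga-\eps)_+}$ factor must be dropped when $\ga\leq\eps$ --- consistent with, but not covered by, your closing remark about case distinctions.
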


In \cite{GK12}, an analogous result on the distribution of zeros of analytic functions on $\Om=\overline{\bb C}\bsn\E$ was obtained via a reduction to the unit disk case.
For our purposes we will need the following extension of \cite[Theorem~0.1]{GK12} where an additional decay assumption at infinity is imposed in exchange for a
stronger conclusion. The extension follows from the reduction to the unit disk case developed in \cite{GK12} combined with the above version (Theorem~\ref{DoZ-D-Thm})
of the unit disk result. We omit the proof as it is a straightforward modification of the one presented in \cite{GK12}.

\begin{theorem} \lb{DoZ-Om-Thm}
Let $\E\subset\bb R$ be a finite gap set and suppose $f(z)$ is an analytic function on $\Om=\overline{\bb C}\bsn\E$ such that $|f(\infty)|=1$ and for some $K,p,q,r\geq0$,
\begin{align}
\log|f(z)| \leq \f{K}{\dist(z,\E)^p\,\dist(z,\pd\E)^q(1+|z|)^r}, \quad z\in\Om.
\end{align}
Then for every $\eps>0$, there exists a constant $C_{p,q,r,\eps}$ independent of $f(z)$ such that the zeros of $f(z)$ satisfy
\begin{align}
\sum_{z\in\Om,\, f(z)=0} \dist(z,\E)^{p'}\dist(z,\pd\E)^{q'}(1+|z|)^{r'} \leq C_{p,q,r,\eps} K,
\end{align}
where $p' = p+1+\eps$, $q' = \f12[(p+2q-1+\eps)_+ - p']$, $r' = (p+q+r-\eps)_+ - p' - q'$, and each zero is repeated according to its multiplicity.
\end{theorem}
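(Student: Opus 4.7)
The plan is to reduce the question to its unit-disk analogue, Theorem~\ref{DoZ-D-Thm}, by means of the conformal/covering construction developed in \cite{GK12}. The only novelty compared to \cite{GK12} is the extra decay-at-infinity factor $(1+|z|)^r$ in the hypothesis, with the matching $(1+|z|)^{r'}$ in the conclusion; on the disk side, these are precisely what the new $|w|^{(\ga-\eps)_+}$ factor in Theorem~\ref{DoZ-D-Thm} was designed to track, via the correspondence $z=\infty\leftrightarrow w=0$.

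I would begin with the uniformizing map $\phi:\Omega\to\bb D$ used in \cite{GK12}, normalized so that $\phi(\infty)=0$. For $N=1$ this is an honest Riemann map (a Joukowski-type map); for $N\ge2$ one works with the Ahlfors function or lifts to the universal cover with appropriate multiplicity bookkeeping, so that $f$ transfers to an analytic function $h$ on $\bb D$ with $|h(0)|=|f(\infty)|=1$ whose zeros are in multiplicity-preserving correspondence with those of $f$. Setting $S=\phi(\pd\E)\subset\pd\bb D$, standard boundary estimates for conformal maps of finite gap domains give, uniformly for $z\in\Omega$ with $w=\phi(z)$,
\begin{align*}
1-|w|\asymp\f{\dist(z,\E)}{\sqrt{\dist(z,\pd\E)(1+|z|)}},\quad \dist(w,S)\asymp\sqrt{\f{\dist(z,\pd\E)}{1+|z|}},\quad |w|\asymp\f{1}{1+|z|},
\end{align*}
with implicit constants depending only on $\E$. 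These are most transparent from the explicit Joukowski map in the $N=1$ case, and in general follow from the square-root singularity of the Green function of $\Omega$ at each point of $\pd\E$ together with its linear vanishing along the smooth part of $\E$.

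Substituting these into the hypothesis yields
\begin{align*}
\log|h(w)|\leq \f{C_\E K\,|w|^{p+q+r}}{(1-|w|)^{p}\,\dist(w,S)^{p+2q}},\quad w\in\bb D,
\end{align*}
which is exactly the hypothesis of Theorem~\ref{DoZ-D-Thm} with $\al=p$, $\be=p+2q$, $\ga=p+q+r$. That theorem then gives, for every $\eps>0$,
\begin{align*}
\sum_{h(w)=0}\f{(1-|w|)^{p+1+\eps}\,\dist(w,S)^{(p+2q-1+\eps)_+}}{|w|^{(p+q+r-\eps)_+}} \leq C_{p,q,r,\eps}\,K.
\end{align*}
Translating each factor back through the equivalences converts the generic summand into $\dist(z,\E)^{p'}\dist(z,\pd\E)^{q'}(1+|z|)^{r'}$ with $p'=p+1+\eps$; using the identity $(p+2q-1+\eps)_+=p'+2q'$ then confirms that $q'$ and $r'$ come out exactly as stated.

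The main obstacle is the multi-connectedness of $\Omega$ when $N\ge2$, which prevents $\phi$ from being literally single-valued and forces care that each zero of $f$ be counted once and with the correct multiplicity on the disk side. This combinatorial reduction is precisely what was carried out in \cite{GK12}; the present task is merely to propagate the new $|w|^\ga$ factor through it, which, given the form of Theorem~\ref{DoZ-D-Thm}, amounts to routine exponent bookkeeping.
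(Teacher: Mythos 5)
Your proposal follows essentially the same route as the paper, which itself omits the details and states only that the result follows from the reduction to the unit disk developed in \cite{GK12} combined with Theorem~\ref{DoZ-D-Thm}. Your distortion estimates for $1-|w|$, $\dist(w,S)$, and $|w|$ give exactly the exponents $\al=p$, $\be=p+2q$, $\ga=p+q+r$ needed to make the bookkeeping produce $p'$, $q'$, $r'$ as stated, and your deferral of the multiple-connectivity issues to \cite{GK12} matches what the authors do.
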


We are now ready to present our finite gap version of the Lieb--Thirring inequalities for non-selfadjoint perturbations of Jacobi matrices from the isospectral torus $\cc T_\E$.

\begin{theorem} \lb{FGap-LT-Thm}
Let $\E\subset\bb R$ be a finite gap set and suppose $J$, $J'$ are two-sided Jacobi matrices such that $J'\in\cc T_\E$ and $J=J'+\de J$ is a compact perturbation of $J'$.
Then for every $p\geq1$ and any $\eps>0$,
\begin{align} \lb{FGap-LT}
\sum_{z\in\si_d(J)} \f{\dist(z,\E)^{p+\eps}(1+|z|)^{\f{1-3\eps}{2}}}{\dist(z,\pd\E)^{\f12}}
\leq L_{\eps,\,p,\,\E}\sum_{n=-\infty}^{\infty}|\de a_n|^p+|\de b_n|^p+|\de c_n|^p,
\end{align}
where the eigenvalues are repeated according to their algebraic multiplicity and the constant $L_{\eps,\,p,\,\E}$ is independent of $J$ and $J'$.
\end{theorem}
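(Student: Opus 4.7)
The plan is to encode $\si_d(J)$ as the zero set (counting algebraic multiplicity) of an analytic function $d(z)$ on $\Om:=\bb C\bsn\E$, to establish a pointwise upper bound on $\log|d(z)|$ of the form required by Theorem~\ref{DoZ-Om-Thm}, and then to read off the eigenvalue sum from that theorem.

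The first step is to factorize $\de J=D^{1/2}MD^{1/2}$, where $D$ is the nonnegative diagonal matrix with entries
$D_{n,n}:=|\de a_{n-1}|+|\de a_n|+|\de b_n|+|\de c_{n-1}|+|\de c_n|$
and $M$ is the tridiagonal operator defined by $M_{n,k}:=(\de J)_{n,k}/(D_{n,n}^{1/2}D_{k,k}^{1/2})$ wherever this is defined, and $M_{n,k}:=0$ otherwise. Every nonzero entry of $M$ has modulus at most $1$, so $\|M\|_\infty\le 3$ by the Schur test, while $\|D\|_p^p\le C_p\sum_n(|\de a_n|^p+|\de b_n|^p+|\de c_n|^p)$; I may assume this sum to be finite, whence $D\in\cc S_p$. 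Setting $m:=\lceil p\rceil$, I then introduce the regularized perturbation determinant
$d(z):=\det_m\bigl(1+\de J(J'-z)^{-1}\bigr)$.
It is analytic on $\Om$, satisfies $d(\infty)=1$, and its zeros with orders of vanishing coincide with the discrete eigenvalues of $J$ and their algebraic multiplicities. The cyclicity identity $\det_m(1+XY)=\det_m(1+YX)$ applied with $X=D^{1/2}$, $Y=MD^{1/2}(J'-z)^{-1}$ (both in $\cc S_{2p}$, so $XY,YX\in\cc S_p\subseteq\cc S_m$ by H\"older) rewrites $d(z)=\det_m\bigl(1+MD^{1/2}(J'-z)^{-1}D^{1/2}\bigr)$.

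Combining the standard bound $|\det_m(1+K)|\le\exp(\Ga_p\|K\|_p^p)$ for $K\in\cc S_p$ (see \cite{Si05}) with $\|M\|_\infty\le 3$, Theorem~\ref{TNE-Thm}, and estimate \eqref{ReflEst2}---applicable because $J'\in\cc T_\E$ makes each spectral measure $d\rho_n$ reflectionless on $\E$---yields, for any $\de>0$,
\begin{equation*}
\log|d(z)|\le\f{C_{p,\de,\E}\,\|D\|_p^p}{\dist(z,\E)^{p-1+\de}\dist(z,\pd\E)^{1/2}(1+|z|)^{1/2-\de}},\quad z\in\Om.
\end{equation*}
Applying Theorem~\ref{DoZ-Om-Thm} to $d$ with growth parameters $(p-1+\de,\tfrac12,\tfrac12-\de)$ and its own small parameter $\de'>0$ then produces a sum inequality with exponents
$p'=p+\de+\de'$, $q'=-\tfrac12$, and $r'=\tfrac12-\de-2\de'$. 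Choosing $\de=\de'=\eps/2$ reproduces precisely the exponents $(p+\eps,-\tfrac12,\tfrac{1-3\eps}{2})$ appearing in \eqref{FGap-LT}, and the resulting bound is linear in $\|D\|_p^p$, hence in $\sum_n(|\de a_n|^p+|\de b_n|^p+|\de c_n|^p)$, completing the proof.

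The hard part will be managing the two independent small losses $\de$ and $\de'$ so that they combine into the single parameter $\eps$ of \eqref{FGap-LT}; the numerical factor $(1-3\eps)/2$ on $1+|z|$ is forced by $r'=\tfrac12-\de-2\de'$ evaluated at $\de=\de'=\eps/2$. A secondary technical concern is the use of $\det_m$ at non-integer order $m=\lceil p\rceil$: the cyclicity identity and the $\cc S_p$-version of the determinant bound are well known (\cite{Si05} and the literature on Schatten-class determinants), but the Schatten-class hypotheses underlying both must be invoked with care in the present factorized setup.
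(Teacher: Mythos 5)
Your proposal is correct and follows essentially the same route as the paper: factor $\de J=D^{1/2}(\,\cdot\,)D^{1/2}$ with a diagonal $D$ controlling the perturbation entries, form the regularized determinant $\det_{\lceil p\rceil}$, bound $\log|d(z)|$ by combining the Schatten-norm estimate of Theorem~\ref{TNE-Thm} with the reflectionless $m$-function bound \eqref{ReflEst2}, and feed the result into Theorem~\ref{DoZ-Om-Thm} with both small parameters set to $\eps/2$, which yields exactly the exponents $(p+\eps,-\tfrac12,\tfrac{1-3\eps}{2})$. The only deviations (using the sum rather than the maximum in $D_{n,n}$, and invoking cyclicity of $\det_m$ to symmetrize the determinant rather than defining it in symmetrized form from the outset) are immaterial.
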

\begin{proof}
Suppose that $\de J\in \cc S_p$ for some $p\geq 1$ and define $D\geq0$ to be the diagonal matrix with the entries
\begin{align}
D_{n,n}=\max\bigl\{|\de a_{n-1}|,|\de a_n|,|\de b_n|,|\de c_{n-1}|,|\de c_n|\bigr\}, \quad n\in\bb Z.
\end{align}
A straightforward verification shows that $\de J = D^{1/2}BD^{1/2}$, where $B$ is a bounded tridiagonal matrix whose entries lie in the unit disk.
This in particular means that $\|B\|\leq3$.
Define
\begin{align}
f(z) = \sideset{}{_{\lceil p\rceil}}\det\bigl(I+D^{1/2}(J'-z)^{-1}D^{1/2}B\bigr),
\end{align}
where $\lceil p\rceil$ is the smallest integer $\geq p$. This regularized perturbation determinant is analytic on $\Om=\overline{\bb C}\bsn\E$ (see, e.g., \cite[Chapter IV. \S 3]{GK69}). More importantly, the zeros of $f$ coincide with the discrete eigenvalues of $J$ and the multiplicity of the zeros matches the algebraic multiplicity of the corresponding eigenvalues (see \cite{HK11} and \cite[Appendix C]{F} for a proof).
By \cite[Lemma~XI.9.22(d)]{DS63}, we have
\begin{align}
\log|f(z)| \leq K_p\|D^{1/2}(J'-z)^{-1}D^{1/2}B\|_p^p
\leq 3K_p\|D^{1/2}(J'-z)^{-1}D^{1/2}\|_p^p
\end{align}
for some constant $K_p$. It thus follows from Theorems \ref{TNE-Thm} and \ref{ReflEst-Thm} (with $\eps/2$ instead of $\eps$) that
\begin{align}
\log|f(z)| \leq \f{K_{\eps,p,\E}\,\|D\|_p^p} {\dist(z,\E)^{p+\f{\eps}{2}-1} \dist(z,\pd\E)^{\f12}(1+|z|)^{\f{1-\eps}{2}}}, \quad z\in\Om.
\end{align}
Applying Theorem~\ref{DoZ-Om-Thm} (with $\eps/2$ instead of $\eps$) and noting that ${(1-3\eps)/2}\leq r'$ and $D_{n,n}^p\leq|\de a_{n-1}|^p+|\de a_n|^p+|\de b_n|^p+|\de c_{n-1}|^p+|\de c_n|^p$ then yield \eqref{FGap-LT}.
\end{proof}



\end{document}